\colorlet{darkishRed}{red!80!black}
\colorlet{darkishBlue}{blue!60!black}
\colorlet{darkishGreen}{green!60!black}
\renewcommand{\leq}{\leqslant}
\renewcommand{\geq}{\geqslant}
\renewcommand{\ge}{\geq}
\renewcommand{\le}{\leq}
\renewcommand{\subset}{\subseteq}
\renewcommand{\supset}{\supseteq}
\newcommand{\Abs}[1]{\partial {#1}}
\newcommand{ \N } { \mathbb{N} }
\newcommand{\eps}{\varepsilon}
\def\calCommandfactory#1{%
   \expandafter\def\csname c#1\endcsname{\mathcal{#1}}}
\def\frakCommandfactory#1{%
   \expandafter\def\csname frak#1\endcsname{\mathfrak{#1}}}
\newcounter{ctr}
  \edef\X{\@Alph\c@ctr}
  \edef\Y{\@alph\c@ctr}
\renewcommand{\cC}{\mathscr{C}}
\newcommand{\script}{\mathcal}
\newcommand{\parentheses}[1]{{\left( {#1} \right)}}
\newcommand{\p}{\parentheses}
\newcommand{\Set}[1]{{\left\lbrace {#1} \right\rbrace}}
\def\set#1:#2{\Set{{#1} \colon {#2}}}
\newtheorem{theorem}{Theorem}[section]
\newtheorem{lemma}[theorem]{Lemma}
\theoremstyle{definition}
\theoremstyle{remark}
\begin{document}
\title[Tree-decompositions displaying topological ends]{Constructing tree-decompositions that display \\ all topological ends}

\author{Max Pitz}
\address{Universit\"at Hamburg, Department of Mathematics, Bundesstra\ss e 55 (Geomatikum), 20146 Hamburg, Germany}
\email{max.pitz@uni-hamburg.de}

\keywords{end-faithul spanning tree, tree-decomposition, ends}

\subjclass[2010]{05C63, 05C05}  

\begin{abstract}
We give a short, topological proof that all graphs admit tree-decompositions displaying their topological ends. 
\end{abstract}

\vspace*{-1cm}
\maketitle

\section{Introduction}

Historically, one of the strongest driving force behind investigating tree structure of  infinite graphs has been Halin's \emph{end-faithful spanning tree conjecture} from the 1960's \cite{Halin_Enden64}, until is was refuted in the early 1990's independently by Seymour \& Thomas \cite{seymour1991end} and by Thomassen \cite{thomassen1992infinite}. 

However, in a 50-page  breakthrough from 2014 (published in 2019), Carmesin \cite{carmesin2019all} proved a
general theorem how to separate ends in graphs that generalised corresponding results from Cayley graphs of finitely generated groups, and applied it to establish that all graphs admit tree-decompositions displaying their topological ends. The latter result also implies that every connected graph has a spanning tree that is end-faithful for all topological ends. Together, these results settled a problem of Diestel from 1992 \cite{diestel1992end} and gave the first satisfying answer to the above conjecture of Halin's in amended form. 

The purpose of this note is to offer a short proof for both of Carmesin's applications that is motivated by topological instead of algebraic considerations. It is based on a technique  that we call \emph{enveloping} a given set of vertices, which enables one to expand any set of vertices without changing the number of ends in its closure, and which we expect to have further applications in the study of tree-decompositions and end structure of infinite graphs.

\section{Background on ends and topological ends}
\label{sec_2}

\subsection{Ends}
For graph theoretic terms we follow \cite{Bible}, and in particular \cite[Chapter~8]{Bible} for ends of graphs. A~$1$-way infinite path is called a \emph{ray} and the subrays of a ray are its \emph{tails}. Two rays in a graph $G = (V,E)$ are \emph{equivalent} if no finite set of vertices separates them; the corresponding equivalence classes of rays are the \emph{ends} of $G$. 
If $X \subseteq V$ is finite and $\eps$ is an end, 
there is a unique component of $G-X$ that contains a tail of every ray in $\eps$, which we denote by $C(X,\eps)$. 
Then $\eps$ \emph{lives} in the component $C(X,\eps)$. 

An end $\eps$ of $G$ is contained \emph{in the closure} of $M$, where $M$ is either  a subgraph of $G$ or a set of vertices of $G$, if for every finite vertex set $X\subset V$ the component $C(X,\eps)$ meets $M$.
We write $\Abs{M}$ 
for the set of ends of $G$ lying in the closure of $M$.

\subsection{Star-comb-lemma}
A \emph{comb} is the union of a ray $R$ (the comb's \emph{spine}) 
with infinitely many disjoint finite paths, possibly trivial, that have precisely their first vertex on~$R$. 
The last vertices of those paths are the \emph{teeth} of this comb.
Given a vertex set $U$, a \emph{comb attached to} $U$ is a comb with all its teeth in $U$, and a \emph{star attached to} $U$ is a subdivided infinite star with all its leaves in $U$.

\begin{lemma}[Star-comb lemma]\label{StarCombLemma}
Let $U$ be an infinite set of vertices in a connected graph $G$.
Then $G$ contains either a comb 
attached to~$U$ or a star attached to~$U$.
\end{lemma}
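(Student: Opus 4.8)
The plan is to reduce to the tree case and then split on a local-finiteness dichotomy. First I would fix a spanning tree $T$ of $G$, which exists as $G$ is connected, and pass to the smallest subtree $T'\subseteq T$ containing $U$: concretely, $T'$ is the union of all $u$--$u'$ paths of $T$ with $u,u'\in U$, and it is connected and minimal with the property $U\subseteq V(T')$. The one consequence of minimality I will use is that \emph{for every $v\in V(T')$, each component of $T'-v$ contains a vertex of $U$} --- otherwise deleting such a component would leave a smaller subtree still containing $U$. Then I would distinguish two cases according to whether $T'$ is locally finite.

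If $T'$ has a vertex $v$ of infinite degree, I build a star. Pick distinct neighbours $v_1,v_2,\dots$ of $v$ in $T'$; by the property above, the component of $T'-v$ containing $v_i$ meets $U$, say in $u_i$, and in the tree $T'$ the $v$--$u_i$ path starts with the edge $vv_i$. Hence distinct such paths meet only in $v$, the $u_i$ are pairwise distinct, and their union is a subdivided infinite star with all leaves in $U$, that is, a star attached to $U$.

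If instead $T'$ is locally finite, I build a comb. As $U$ is infinite, so is $T'$, and K\H{o}nig's Lemma yields a ray $R=r_0r_1r_2\dots$ in $T'$. Rooting $T'$ at $r_0$ makes each $r_{n+1}$ a child of $r_n$; write $D_x$ for the set of descendants of $x$, with $x\in D_x$. The crux is the claim that $(D_{r_n}\setminus D_{r_{n+1}})\cap U\ne\emptyset$ for infinitely many $n$. Granting it, for each such $n$ I pick $u_n$ in that set and take the $r_n$--$u_n$ path $P_n$ of $T'$; since $u_n\notin D_{r_{n+1}}$, the path $P_n$ leaves $R$ at $r_n$ and lies inside $\{r_n\}\cup(D_{r_n}\setminus D_{r_{n+1}})$, a set that meets $R$ only in $r_n$ and is disjoint from the corresponding set for any other good index. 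So the $P_n$ are pairwise disjoint apart from their feet on $R$, the $u_n$ are distinct, and $R$ (from its first such foot onward) together with the $P_n$ is a comb attached to $U$. To prove the claim, suppose it fails: there is $N\ge 1$ with $(D_{r_n}\setminus D_{r_{n+1}})\cap U=\emptyset$, hence $D_{r_n}\cap U=D_{r_{n+1}}\cap U$, for all $n\ge N$; then $D_{r_N}\cap U=\bigcap_{n\ge N}(D_{r_n}\cap U)$, which is empty because $\bigcap_n D_{r_n}=\emptyset$ (a vertex of $D_{r_n}$ has depth at least $n$). But $D_{r_N}$ is precisely the component of $T'-r_{N-1}$ containing $r_N$, so this contradicts the minimality property of $T'$.

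I expect the comb case to be the only real obstacle: an arbitrary ray in a spanning tree of $G$ need not extend to a comb attached to $U$, since $U$ could sit inside a single branch hanging off the ray, and the point of passing to the minimal subtree $T'$ is exactly to force $U$ to be spread along every ray, which is what makes the claim hold. The remaining steps are bookkeeping about rooted trees. (One could avoid $T'$ by rooting $T$ itself and descending along vertices $t$ with $D_t\cap U$ infinite, but the minimal-subtree formulation keeps the star case cleanest.)
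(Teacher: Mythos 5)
Your proof is correct. Note that the paper does not prove this lemma at all: it is quoted as standard background (Diestel's star--comb lemma, cited from \cite{Bible}), so there is no in-paper argument to compare with; your argument is essentially the classical one — pass to the minimal subtree $T'$ of a spanning tree containing $U$, note that every component of $T'-v$ meets $U$, and then split according to whether $T'$ has a vertex of infinite degree (yielding a subdivided star, using that each component of $T'-v$ contains exactly one neighbour of $v$, which is what makes your paths meet only in $v$) or is locally finite (K\"onig's lemma gives a ray, and your claim that $(D_{r_n}\setminus D_{r_{n+1}})\cap U\neq\emptyset$ for infinitely many $n$ is exactly what forces $U$ to spread along the ray). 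The only steps worth spelling out a little more are the ones you compress: that $D_{r_N}$ is precisely the component of $T'-r_{N-1}$ containing $r_N$, and that the $P_n$ for distinct good indices lie in disjoint sets $D_{r_n}\setminus D_{r_{n+1}}$ meeting $R$ only in $r_n$ — both are true for the reasons you indicate, so the proof stands as written.
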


\subsection{Dominated ends}
A vertex $v$ \emph{dominates} an end $\eps$ if there is a star with center $v$ attached to some (equivalently: any) ray of $\eps$. In this case, we say that $\eps$ is \emph{dominated}. Diestel and K\"uhn proved that the undominated ends of a graph $G$ correspond precisely to the topological ends in the sense of Freudenthal~\cite{diestel2003graph}, and hence the undominated ends are also called \emph{topological} ends of a graph.

We will need the following standard result isolated from the proof of \cite[Theorem~2.2]{diestel2003graph}:
\begin{lemma}
\label{lem:undominatedends}
Suppose $\set{X_n}:{n \in \N}$ is a sequence of disjoint finite sets of vertices in a graph $G$ and suppose $C_n$ are connected components of $G-X_n$ such that $C_{n} \supseteq 
C_{n+1} \cup X_{n+1} $ for all $n \in \N$. Then there is a unique end of $G$ that lives in all $C_n$, and this end is undominated.
\end{lemma}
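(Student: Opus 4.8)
The plan is to first establish existence and uniqueness of an end living in all the $C_n$, and then to rule out domination. For existence, I would apply the Star--comb lemma (Lemma \ref{StarCombLemma}) to a suitable infinite vertex set: pick one vertex $v_n \in C_n$ for each $n$ (nonempty since $C_n \supseteq C_{n+1}$, and the $C_n$ cannot stabilize to a finite set because $C_n \supseteq X_{n+1}$ is disjoint from $C_{n+1}$, forcing strict decrease). Let $U = \{v_n : n \in \N\}$. Since $G$ is connected (or after passing to the component containing $U$; note every $v_n$ lies in $C_0$, which is connected), the Star--comb lemma yields a comb or a star attached to $U$. A star attached to $U$ has its center $z$ in some $C_m$, hence $z \notin X_n$ for all $n > m$; but its infinitely many subdivided-leaf-paths would have to cross each finite set $X_n$ with $n$ large, an impossibility once we note only finitely many leaves lie outside $C_n$ while the paths from $z$ to the remaining leaves stay in $C_m \setminus X_n$ — contradiction, so we get a comb, whose spine is a ray $R$. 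The ray $R$ eventually enters every $C_n$: its teeth lie in $U$ and all but finitely many are in $C_n$, and the finite tooth-paths together with the relevant tail of $R$ avoid $X_n$, so a tail of $R$ lies in $C_n$. Thus $\eps := [R]$ lives in every $C_n$.

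For uniqueness, suppose $\eps'$ also lives in all $C_n$ with $\eps' \neq \eps$. Then some finite $X \subset V$ separates rays of $\eps$ from rays of $\eps'$, i.e. $C(X,\eps) \neq C(X,\eps')$. Choose $n$ large enough that $X \subseteq X_0 \cup \dots \cup X_{n-1}$ is disjoint from $C_n$ — possible since the $X_k$ are disjoint, so only finitely many of them meet $X$, and $C_n$ avoids $X_0, \dots, X_{n-1}$ for $n$ large by the nesting $C_n \supseteq X_{n+1}$ applied downward... more directly: $C_n$ is disjoint from $X_k$ for all $k \le n$, hence from $X$ once $n$ exceeds the finitely many indices with $X_k \cap X \neq \emptyset$. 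Then the connected set $C_n$ avoids $X$ and meets both $C(X,\eps)$ and $C(X,\eps')$ (as both ends live in $C_n$), forcing $C(X,\eps) = C(X,\eps')$, a contradiction.

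For undominatedness, suppose a vertex $v$ dominates $\eps$. Then $v \in C_m$ for exactly one $m$ (or $v \in X_k$ for some $k$; in that case $v \notin C_n$ for $n > k$, and we argue with such $n$). Pick $n$ with $v \notin C_n$, so $v \in X_k$ for some $k < n$ or $v \in C_k \setminus C_n$ for some $k$; in either case $v$ is separated from $C_n$ by the finite set $X_n$ — indeed $C_n$ is a component of $G - X_n$ not containing $v$. A star with center $v$ attached to a ray of $\eps$ has infinitely many leaves on that ray, hence (the ray having a tail in $C_n$) infinitely many leaves in $C_n$, so infinitely many disjoint $v$–$C_n$ paths; but each must pass through the finite set $X_n$, impossible. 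Hence $\eps$ is undominated.

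The main obstacle I expect is the bookkeeping in the existence step — ensuring the comb's spine genuinely has a tail inside each $C_n$ (not merely teeth in $U \cap C_n$), which requires observing that the finite paths connecting spine to teeth, being finite and eventually disjoint from the finite set $X_n$, must lie in the same component $C_n$ as the teeth they end at, and likewise a cofinal tail of the spine. Ruling out the star case cleanly is a closely related point. Once existence is in hand, uniqueness and undominatedness are both short "separator must meet a connected set it can't" arguments, essentially the same idea reused.
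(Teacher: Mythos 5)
Your overall plan (star--comb for existence, then two short separation arguments) is workable, but there is a genuine gap that surfaces in all three steps: you never prove, yet repeatedly use, the fact that no vertex lies in \emph{every} $C_n$ (equivalently, that every finite vertex set is disjoint from $C_n$ for all large $n$). In the star case your stated justification is in fact wrong: from $z\in C_m$ you infer $z\notin X_n$ for all $n>m$, but the hypotheses give $X_n\subseteq C_{n-1}\subseteq C_m$ for $n>m$, so $C_m$ \emph{contains} all later separators (what is true is $z\notin X_k$ for $k\le m$). More importantly, your crossing argument only yields a contradiction once you know $z\notin C_n$ for all large $n$: if $z\in\bigcap_n C_n$, then for each $n$ all but finitely many leaves lie in $C_n$, at most $\cardinality{X_n}$ of the internally disjoint leaf-paths can meet $X_n$, and the remaining paths simply stay inside $C_n$ --- no contradiction at all. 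The same unproven fact is smuggled into the uniqueness step (your ``more directly'' argument only shows $C_n$ avoids the part of $X$ lying in $\bigcup_k X_k$; an arbitrary finite separator $X$ need not be contained in $\bigcup_k X_k$) and into the undominatedness step (``pick $n$ with $v\notin C_n$'' is exactly what may fail if $v\in\bigcap_n C_n$; also ``$v\in C_m$ for exactly one $m$'' cannot be right, since membership in the nested $C_n$ is an initial segment of indices).

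The gap is fixable, and the fix is the one piece of content your write-up is missing: setting $D_n:=G-C_n$, show $D_{n+1}\supseteq D_n\cup N(D_n)$. Indeed, if some $p\in C_{n+1}$ had a neighbour $q\in D_n$, then $q\in C_{n+1}\cup X_{n+1}\subseteq C_n$ because $N(C_{n+1})\subseteq X_{n+1}$, contradicting $q\notin C_n$. So $D_n$ absorbs a full neighbourhood layer at each step, and hence every vertex of $C_0$ (which has finite distance to $X_1\subseteq D_1$ inside the connected set $C_0$) lies in $D_n$ for all large $n$; vertices outside $C_0$ lie in no $C_n$ anyway. Thus $\bigcap_n C_n=\emptyset$ and every finite set is eventually disjoint from $C_n$, which repairs the star exclusion, the uniqueness argument and the undominatedness argument simultaneously. (This uses that the $X_n$ are nonempty, which is tacit in the lemma and true in the paper's application; with all $X_n$ empty the statement itself would fail.) Two smaller points: choose the $v_n$ distinct, e.g.\ $v_n\in X_{n+1}$, since an arbitrary choice of $v_n\in C_n$ need not make $U$ infinite; and note that the paper itself only cites this lemma from Diestel--K\"uhn, whose proof constructs the ray directly by concatenating paths through consecutive separators rather than via the star--comb lemma, thereby avoiding the star case altogether.
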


We will also need the following routine result relating the closure operator to dominated ends.

\begin{lemma}
\label{lem:closuredom}
Let $W$ be a finite set of vertices in a graph $G$, and $U= N(W)$ its neighbourhood. Then $\Abs{U}$ consists of precisely those ends of $G$ that are dominated by a vertex in $W$.
\end{lemma}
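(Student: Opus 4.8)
The plan is to verify the two inclusions of the claimed equality separately, by elementary means; the only substantial point is the construction of a subdivided star attached to a ray of the end in question.

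To see that every end $\eps$ dominated by a vertex $v\in W$ lies in $\Abs{U}$, I would fix a ray $R\in\eps$ together with a ``fan'' of infinitely many $v$--$R$ paths meeting pairwise only in $v$, and then, for an arbitrary finite $X\subset V$, extract one path from the fan that is useful for $X$. Writing $R'$ for the maximal tail of $R$ contained in $C(X,\eps)$: since the fan paths have pairwise distinct endpoints on $R$ and pairwise disjoint parts-after-$v$, all but finitely many of them both end in $R'$ and have their part after $v$ disjoint from $X$; fix such a path $Q$, with endpoint $q\in R'$. Concatenating $Q$ with the tail of $R$ from $q$ onwards yields a ray $S$ starting at $v$ with $S-v\subseteq C(X,\eps)$, because $S-v$ is connected, avoids $X$, and meets $C(X,\eps)$. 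As $W$ is finite, $S$ has a last vertex in $W$, and the successor of that vertex on $S$ lies outside $W$, hence in $N(W)=U$, and also in $S-v\subseteq C(X,\eps)$. So $C(X,\eps)$ meets $U$ for every finite $X$, i.e.\ $\eps\in\Abs{U}$.

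For the converse I would first reduce to a single vertex. If $\eps\in\Abs{U}$ but no vertex of $W$ had a neighbour in $C(X,\eps)$ for every finite $X$, then choosing for each $w\in W$ a finite $X_w$ with $N(w)\cap C(X_w,\eps)=\emptyset$ and setting $X:=\bigcup_{w\in W}X_w$ would give $C(X,\eps)\subseteq C(X_w,\eps)$ for every $w$, hence $C(X,\eps)\cap U=\emptyset$, contradicting $\eps\in\Abs{U}$. So fix $w\in W$ such that for every finite $X$ some neighbour of $w$ lies in $C(X,\eps)$. Now I would recursively build $w$--$R$ paths $Q_1,Q_2,\dots$, for a fixed $R\in\eps$, whose pairwise intersection is $\{w\}$: having chosen $Q_1,\dots,Q_n$, put $X_n:=\bigcup_{i\le n}Q_i$ (a finite set containing $w$), pick a neighbour $u\in C(X_n,\eps)$ of $w$, take a path inside the connected set $C(X_n,\eps)$ from $u$ to $R$ (possible since $C(X_n,\eps)$ contains a tail of $R$), and let $Q_{n+1}$ be that path together with the edge $wu$. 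Since $C(X_n,\eps)$ is disjoint from $X_n$, the path $Q_{n+1}$ meets each earlier $Q_i$ exactly in $w$, and its endpoint on $R$ is new. The union $\bigcup_i Q_i$ is then a subdivided star with centre $w$ and infinitely many leaves on $R$, so $w\in W$ dominates $\eps$.

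I expect the recursive construction in the second inclusion to be the core of the argument, though it is tame: the choice $X_n=\bigcup_{i\le n}Q_i$ makes the required disjointness automatic, and the reduction to a single vertex is just a finiteness argument. In the first inclusion the only care needed is bookkeeping — dodging the given finite set $X$ with the interior of a fan path while still reaching the part of $R$ that $X$ does not separate from $\eps$, and then passing from a vertex of $W$ to an actual vertex of $N(W)$ inside $C(X,\eps)$. One could instead route the second inclusion through the star--comb lemma applied to $U$, but the direct recursion seems shorter and more transparent.
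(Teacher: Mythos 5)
Your proof is correct, but it takes a more self-contained route than the paper, whose entire proof is a one-line reduction to the well-known fact that $\eps \in \Abs{U}$ if and only if there is a comb attached to $U$ with spine in $\eps$ (from which the lemma follows by extending the comb's teeth into the finite set $W$ and a pigeonhole argument in one direction, and by turning a dominating star into such a comb in the other). You instead work directly with the definition of $\Abs{U}$: for the forward inclusion you pick, for each finite $X$, a fan path avoiding $X$ and ending in the tail of $R$ inside $C(X,\eps)$, and then walk along the resulting ray past the last $W$-vertex to land in $N(W)\cap C(X,\eps)$; for the converse you first reduce to a single $w\in W$ by a finiteness argument and then build the dominating star greedily, using $X_n=\bigcup_{i\le n}V(Q_i)$ to force the new $w$--$R$ path to avoid all previous ones. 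Both steps are sound; the only cosmetic adjustment is to start the recursion with $X_0=\{w\}$ (rather than $\emptyset$) so that the first path cannot revisit $w$, which is what your parenthetical ``containing $w$'' already intends. What the paper's citation buys is brevity given the standard star--comb toolkit; what your argument buys is independence from that toolkit, at the cost of redoing, in effect, the proof of the cited characterisation in this special case.
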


\begin{proof}
This follows from the well-known fact that $\eps \in \Abs{U}$ if and only if there is  a comb attached to $U$ with spine in $\eps$.
\end{proof}

\subsection{End-faithful spanning trees and tree-decompositions}

A rooted spanning tree $T$ of a graph $G$ is \emph{end-faithful} for a set $\Psi$ of ends of $G$ if for each end $\eps \in \Psi$ there is a unique rooted ray $R$ in $T$ with $R \in \eps$.

A \emph{tree-decomposition} of a graph $G$ is a pair $\script{T} = \p{T,\script{V}}$ where $T$ is a tree and $\cV = (V_t \colon t \in T)$ is a family of vertex sets of $G$ called \emph{parts} such that the following hold (see also \cite[\S12.3]{Bible}):
\begin{enumerate}[label=(T\arabic*)]
\item for every vertex $v$ of $G$ there exists $t \in T$ such that $v \in V_t$;
\item for every edge $e$ of $G$ there exists $t \in T$ such that $e \in G[V_t]$; and
\item $V_{t_1} \cap V_{t_3} \subseteq V_{t_2}$ whenever $t_2$ lies on the $t_1-t_3$ path in $T$.
\end{enumerate}
Let $e = xy$ be any edge of $T$ and let $T_x$ and $T_y$ be the two components of $T-e$ with $x \in T_x$ and $y \in T_y$. Each edge $e=xy$ of $T$ in a tree-decomposition gives rise to a separator $X_e:=V_x \cap V_y$ called the separator \emph{induced by} the edge $e$, which separates $A_x=\bigcup_{t \in T_x} V_t$ from $A_y=\bigcup_{t \in T_y} V_t$. 
The tree-decomposition has \emph{finite adhesion} if all separators of $G$ induced by the edges of $T$ are finite. 

Given a tree-decomposition $\script{T} = \p{T,\script{V}}$ of finite adhesion of $G$, any end $\eps$ of $G$ orients each edge $e=xy$ of $T$ according to whether $\eps$ lives in a component of $G[A_x] - X_e$ or $G[A_y]-X_e$. This orientation of $T$ points towards a node of $T$ or to an end of $T$, and $\eps$ \emph{lives} in that part for that node or that end, respectively. 
%
Then $\script{T}$ \emph{displays} a set $\Psi$ of ends of $G$ if in every end of $T$ there lives a unique end and it is in $\Psi$, and conversely every end of $\Psi$ lives in some end of $T$.

Finally, a \emph{rooted tree-decomposition} is $\script{T} = \p{T,\script{V}}$ where the decomposition tree $T$ is rooted. A rooted tree-decomposition is said to have \emph{upwards disjoint separators} if the induced separators $X_e$ and $X_{e'}$ for any two distinct edges $e < e'$ comparable in the tree order of $T$ are disjoint.

\subsection{Rooted trees containing a set of vertices cofinally}

Recall that a subset $X$ of a poset $P=(P,{\le})$ is \emph{cofinal} in $P$, and ${\le}$, if for every $p\in P$ there is an $x\in X$ with $x\ge p$.
We say that a rooted tree $T\subset G$ contains a set $U$ \emph{cofinally} if $U\subset V(T)$ and $U$ is cofinal in the tree order of~$T$. The main assertion of the following is an immediate corollary of~\cite[Lemma~2.13]{StarComb1StarsAndCombs}.

\begin{lemma}\label{lem:cofinalConcentrated}
Let $G$ be any graph, and let $U\subset V(G)$ be a set of vertices. 
If $T\subset G$ is a rooted tree that contains $U$ cofinally, then $\Abs{T} = \Abs{U}$. 
\end{lemma}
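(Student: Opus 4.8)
The plan is to establish the two inclusions $\Abs{U} \subseteq \Abs{T}$ and $\Abs{T} \subseteq \Abs{U}$ directly from the definition of the closure operator $\Abs{\cdot}$, rather than by unpacking the cited lemma; in fact only elementary facts about the components of $G$ after deleting a finite vertex set will be needed. One inclusion is immediate: since $U \subseteq V(T)$, any component $C(X,\eps)$ that meets $U$ also meets $T$, and hence $\Abs{U} \subseteq \Abs{T}$.

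For the converse I would fix an end $\eps \in \Abs{T}$ and a finite set $X \subseteq V(G)$, and aim to show that $C := C(X,\eps)$ contains a vertex of $U$. The crucial step is to locate a vertex $t_0 \in V(T) \cap C$ whose up-closure $S := \{w \in V(T) : w \ge t_0\}$ in the tree order of $T$ is disjoint from $X$. Granting this, the lemma follows at once: $S$ is connected in $T$, hence in $G - X$, and meets $C$ at $t_0$, so $S \subseteq C$; and since $U$ is cofinal in $T$ there is some $u \in U$ with $u \ge t_0$, i.e.\ $u \in S \subseteq C$.

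To find $t_0$ I would argue by contradiction. If no such vertex exists, then every $t \in V(T) \cap C$ lies below some vertex of $X \cap V(T)$ in the tree order of $T$. Since $X \cap V(T)$ is finite and the down-closure $\{w \in V(T) : w \le x\}$ of each $x \in X \cap V(T)$ is a finite path (the root--$x$ path in $T$), the set $Y := \bigcup_{x \in X \cap V(T)} \{w \in V(T) : w \le x\}$ is finite and contains all of $V(T) \cap C$. Now apply the hypothesis $\eps \in \Abs{T}$ to the finite set $W := X \cup Y$: the component $C(W,\eps)$ meets $V(T)$, say at a vertex $s$, whereas $C(W,\eps) \subseteq C(X,\eps) = C$ since $W \supseteq X$. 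Then $s \in V(T) \cap C \subseteq Y \subseteq W$, contradicting $s \notin W$. This produces the required $t_0$, and completes the proof.

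I expect the only genuinely delicate point to be this last contradiction: one has to notice that the obstruction ``every vertex of $T$ inside $C$ sits below $X$'' confines $V(T) \cap C$ to a finite set, which then clashes with $\eps$ lying in the closure of $T$ once the separator is enlarged from $X$ to $X \cup Y$. Apart from this, the main thing to keep straight is the orientation of the tree order: cofinality of $U$ means $U$ sits ``at the top'' of $T$, so from a vertex of $T$ inside $C$ one must be able to climb \emph{upward} to reach $U$ — which is exactly why it is the up-closure of $t_0$, rather than its down-closure, that must avoid $X$.
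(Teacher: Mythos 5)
Your proof is correct. Note that the paper does not actually prove this lemma in-house: it imports it as an immediate corollary of a lemma from the Stars-and-Combs series, so your argument supplies a self-contained proof where the paper only gives a citation. The cited source (and most standard treatments) would phrase the nontrivial inclusion via the comb/star characterisation of $\Abs{\cdot}$, rerouting teeth of a comb attached to $V(T)$ upwards through $T$ into $U$; you instead work directly with the definition of the end closure via separators, which is arguably more elementary since it avoids the star--comb lemma altogether. The steps all check out: the inclusion $\Abs{U}\subseteq\Abs{T}$ is indeed trivial from $U\subseteq V(T)$; the up-closure of a vertex $t_0$ of a rooted tree induces a subtree, so if it avoids $X$ it lies in the component $C(X,\eps)$ containing $t_0$, and cofinality of $U$ then places a vertex of $U$ in that component; and in the contradiction step you correctly use that down-closures in a graph-theoretic rooted tree are the finite root--$x$ paths, so the failure of your ``crucial step'' would confine $V(T)\cap C(X,\eps)$ to the finite set $Y$, after which enlarging the separator to $W=X\cup Y$ and using the standard monotonicity $C(W,\eps)\subseteq C(X,\eps)$ contradicts $\eps\in\Abs{T}$. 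A further small merit of your route is that it visibly uses no connectedness of $G$, matching the lemma's hypothesis ``any graph''; the only orientation issue (up-closure versus down-closure relative to cofinality of $U$) is exactly the one you flag, and you handle it correctly.
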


\section{Envelopes for sets of vertices}
\label{sec_envelope}

Let $G$ be a connected graph. Given a subgraph $C \subseteq G$, write $N(C)$ for the set of vertices in $G -C$ with a neighbour in $C$. An \emph{adhesion set} of a set of vertices or a subgraph $U \subset G$ is any subset of the form $N(C)$ for a component $C$ of $G-U$. The set or subgraph $U$ is said to have \emph{finite adhesion} in $G$ if all its adhesion sets are finite.

An \emph{envelope} for a set of vertices $U \subset V(G)$ is a set of vertices $U^* \supseteq U$ of finite adhesion such that $\Abs{ U^*} = \Abs{U}$. The following theorem has been developed by Kurkofka and the author in \cite[Theorem~3.1]{pitz2021Rep}. We take the opportunity here to present a somewhat different proof.

\begin{theorem}
\label{thm:envelope}
Any set of vertices in a connected graph admits a connected envelope. 
\end{theorem}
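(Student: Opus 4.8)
The plan is to build the connected envelope $U^*$ as the vertex set of a rooted tree $T \subseteq G$ that contains $U$ cofinally, together with finitely many extra vertices bolted on to kill off the infinite adhesion sets. The point of starting from such a tree is that Lemma~\ref{lem:cofinalConcentrated} immediately gives $\Abs{T} = \Abs{U}$, so the ``$\Abs{U^*} = \Abs{U}$'' half of the definition of an envelope will come essentially for free once we check that adding our extra vertices does not change the closure. Constructing a rooted tree $T \subseteq G$ containing a prescribed vertex set cofinally is standard in a connected graph; one can for instance enumerate $U = \{u_0, u_1, \dots\}$ (the finite case being trivial) and build $T$ by successively adding a $u_{n}$--$T$ path, or simply invoke the cited machinery from \cite{StarComb1StarsAndCombs} behind Lemma~\ref{lem:cofinalConcentrated}.

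The substance of the argument is then to arrange finite adhesion. Let $C$ be a component of $G - V(T)$; its adhesion set $N(C)$ consists of vertices of $T$ with a neighbour in $C$. If some $N(C)$ is infinite, apply the star--comb lemma (Lemma~\ref{StarCombLemma}) inside $G[V(C) \cup N(C)]$ to the infinite set $N(C)$. A comb attached to $N(C)$ would have its spine in an end lying in the closure of $N(C) \subseteq V(T)$, hence in $\Abs{T} = \Abs{U}$ --- but the spine lives in $C$, which is disjoint from every $C(X,\eps)$ for $X \supseteq$ a suitable finite separator; so in fact the comb forces an end of $G$ that lives in $C$ and lies in $\Abs{U}$, and we must rule this out or absorb it. The clean alternative the lemma offers is a \emph{star}: a vertex $v \in V(C)$ dominating infinitely many vertices of $N(C)$ via disjoint paths. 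The idea is to add such dominating vertices (one per ``bad'' component, or rather enough of them) to $T$: after adding $v$ to the tree, the component $C$ is broken into pieces whose adhesion sets are ``smaller'', and one sets up a rank/induction or a maximality argument to show this process terminates with all adhesion sets finite. Concretely, I would take $U^*$ to be $V(T)$ together with a maximal (by Zorn, or by a careful transfinite recursion) set of such added vertices, chosen so that $U^*$ stays the vertex set of a connected subgraph and stays cofinal in a rooted tree refining $T$.

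Two things then remain to verify. First, \emph{finite adhesion of $U^*$}: this is where the main obstacle lies, because naively adding one dominating vertex need not make a given adhesion set finite --- one may create new bad components, and the recursion could run forever. The fix I anticipate is to phrase the construction so that each step strictly decreases something well-founded: e.g. work with a fixed exhaustion $X_0 \subseteq X_1 \subseteq \cdots$ of $G$ by finite vertex sets, and ensure that after stage $n$ every component of $G - U^*_n$ meeting $X_n$ has finite adhesion, using that $X_n$ is finite to bound how many dominating vertices are needed at that stage; then $U^* = \bigcup_n U^*_n$ works, and any adhesion set $N(C)$ of $U^*$, being contained in some $X_n$ up to finitely many vertices, is finite. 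Second, \emph{preservation of the closure}: we add only vertices of $G$, so $\Abs{U^*} \supseteq \Abs{U}$ is automatic from $U \subseteq U^*$; for the reverse, each added vertex $v$ is dominated-into from within a single component $C$ of $G - V(T)$, so $v$ lies in $C(X,\eps)$ only for ends $\eps$ already living in $C$, and the star--comb dichotomy at $C$ was invoked precisely to guarantee $\Abs{C} \subseteq \Abs{N(C)} \subseteq \Abs{T} = \Abs{U}$; pushing this through the (countable) recursion gives $\Abs{U^*} = \Abs{U}$. Finally, connectedness of the envelope is built in since at every stage we only ever add vertices together with paths joining them into the current connected subgraph.
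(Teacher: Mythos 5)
There is a genuine gap, in fact two. First, your construction only ever adds single \emph{vertices} (centres of stars), and this cannot work. The comb outcome of Lemma~\ref{StarCombLemma} cannot be ``ruled out'': an end of $\Abs{U}$ may perfectly well lie in the closure of a component $C$ of $G-V(T)$ that is attached to $T$ by a comb, and then there is no dominating vertex available to add; the only way to make progress is to absorb the comb itself, i.e.\ to add infinitely many vertices (its spine and connecting paths). Even in the star case, adding only the centre $v$ does not shrink anything: the components of $C-v$ (together with their attachments through the subdivided star you discarded) can each still have infinite adhesion, so no well-founded quantity visibly decreases. The paper's construction therefore adds, at each stage, the full vertex sets of a \emph{maximal} internally disjoint family $\cC_i$ of external stars \emph{and combs} attached to the current set $U_i$ --- and once whole combs are added, the equality $\Abs{U^*}=\Abs{U}$ is no longer ``essentially for free'' from Lemma~\ref{lem:cofinalConcentrated}; it needs the separate counting argument that for $\eps\notin\Abs{U_0}$, witnessed by a finite $X$ with $C(X,\eps)$ avoiding $U_0$, at most $|X|$ of the added stars/combs can ever meet $C(X,\eps)$, each only finitely. (Your auxiliary claim $\Abs{C}\subseteq\Abs{N(C)}$ is also false in general: take $C$ a ray attached to a single tree vertex.)

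Second, your termination scheme does not stand up. A finite exhaustion $X_0\subseteq X_1\subseteq\cdots$ with $\bigcup_n X_n=V(G)$ exists only for countable $G$, whereas the theorem concerns arbitrary graphs; and even for countable $G$ the final step fails: an adhesion set of the limit $U^*=\bigcup_n U^*_n$ is an infinite subset of an increasing union and need not be contained in any single $X_n$ or stage ``up to finitely many vertices''. This is precisely the difficulty the paper addresses by running the recursion for $\omega_1$ steps rather than $\omega$: a witness to an infinite adhesion set of $U^*$ (an external star or comb with interior in a component of $G-U^*$, obtained from Lemma~\ref{StarCombLemma}) has a \emph{countable} attachment set, so by the uncountable cofinality of $\omega_1$ that attachment set lies in a single $U_i$, contradicting the maximality of $\cC_i$. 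At a limit of countable cofinality this argument is unavailable, which is why an $\omega$-length recursion --- however the stages are bookkept --- needs a genuinely different idea. So the skeleton you share with the paper (start from a tree containing $U$ cofinally, use the star--comb lemma to attack infinite adhesion sets) is right, but the two essential mechanisms --- absorbing entire external stars and combs via maximal families, and the $\omega_1$/cofinality argument for finite adhesion --- are missing, and the substitutes you propose do not work.
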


\begin{proof}
We use the following concepts. 
Let $W$ be any set of vertices. 
An \emph{external comb attached to} $W$ is the union of a ray~$R$ that avoids~$W$ together with infinitely many disjoint 
$R$--$W$ paths.
The last vertices of those paths in $W$ form the \emph{attachment set} of this external comb.
An \emph{external star attached to} $W$ is a subdivided infinite star with precisely its leaves in~$W$.
Its set of leaves is its \emph{attachment set}.
The \emph{interior} of an external star or comb attached to~$W$ is obtained from it by deleting~$W$.
We call a collection of external stars and combs attached to~$W$ \emph{internally disjoint} if all its elements have pairwise disjoint interior.

We recursively construct a sequence $(\,U_i \colon i < \omega_1\,)$ of sets of vertices in $G$ as follows.
Let $T\subset G$ be a rooted tree that includes $U$ cofinally, and put $U_0:=V(T)$.
If $U_i$ is already defined, we use Zorn's lemma to choose a maximal collection $\cC_i$ consisting of internally disjoint external stars and combs in~$G$ attached to~$U_i$, and let $U_{i+1} := U_i \cup V[\,\bigcup \cC_i\,]$. 
For limits $\ell < \omega_1$ we define $U_\ell := \bigcup_{i < \ell} U_i$.
We claim that $U^* := \bigcup_{i < \omega_1} U_i$ is a connected envelope for $U$.

First, $U_0$ is connected, and it follows by induction on~$i$ that every $U_i$ is connected, too. Hence, so is $U^*$.
Similarly, $\Abs{U_0} = \Abs{U}$ by choice of $T$ and Lemma~\ref{lem:cofinalConcentrated}, and it follows once again by induction on~$i$ that $\Abs{U_i} = \Abs{U}$ for every $i < \omega_1$. Indeed, consider an end $\eps \notin \Abs{U_0}$. Then there is a finite set of vertices $X$ such that $C(X,\eps)$ avoids $U_0$. But then  throughout the whole process, we will attach at most $|X|$ external combs or stars that intersect $C(X,\eps)$, as every one also has to intersect $X$ internally. Since every such star or comb will intersect $C(X,\eps)$ finitely, also $U^*$ intersects $C(X,\eps)$ finitely, witnessing $\eps \notin \Abs{U^*}$. This gives $\Abs{U^*} = \Abs{U}$ as desired.

To see that $U^*$ has finite adhesion, suppose for a contradiction that there is a component $C$ of $G-U^*$ with infinite neighbourhood.
Then by a routine application of the star-comb lemma (Lemma~\ref{StarCombLemma}), we find either an external star or comb attached to~$U^*$ whose interior is completely contained in~$C$. 
Its countable attachment set, however, already belongs to some $U_i$ with~$i < \omega_1$ (for $\omega_1$ has uncountable cofinality). But then the existence of this external star or comb contradicts the maximality of~$\cC_i$.
\end{proof}

\section{Tree-decompositions that displays all topological ends}

\begin{lemma}
\label{lem:refiningsequence}
Every connected graph $G$ admits a sequence of induced connected subgraphs $H_0 \subset H_1 \subset \cdots$ in $G$ all of finite adhesion such that
\begin{enumerate}
	\item $N(H_n) \subset H_{n+1}$ for all $n \in \N$ (implying $G = \bigcup_{n \in \N} H_n$),
	\item for all $n \in \N$, every topological end lives in a unique component of $G-H_n$, and
	\item for all $n \in \N$ and every component $C$ of $G-H_n$, the set $C \cap H_{n+1}$ 
	is connected.
\end{enumerate}
\end{lemma}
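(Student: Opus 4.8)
The plan is to build the $H_n$ by recursion. For the base case take $H_0 := G[\{v_0\}]$ for an arbitrary vertex $v_0$: this is a connected induced subgraph of finite adhesion (every adhesion set lies inside $\{v_0\}$), and $\Abs{V(H_0)}=\emptyset$, so trivially every topological end lives in a unique component of $G-H_0$. For the recursion step, suppose $H_n$ is a connected induced subgraph of finite adhesion in which every topological end lives in a unique component of $G-H_n$. For each component $C$ of $G-H_n$ let $N_C := V(C)\cap N(H_n)$ be the set of vertices of $C$ with a neighbour in $H_n$; each such vertex has a neighbour in the finite adhesion set $N(C)\subset V(H_n)$, so $N_C\subset N(N(C))$. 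Viewing the connected graph $C$ as an ambient graph in its own right, Theorem~\ref{thm:envelope} provides a \emph{connected envelope $D_C\subset C$ of $N_C$, formed inside $C$}: thus $N_C\subset V(D_C)$, the graph $D_C$ is connected, $D_C$ has finite adhesion in $C$, and the ends of $C$ in the $C$-closure of $V(D_C)$ are exactly those in the $C$-closure of $N_C$. Finally put
\[
  H_{n+1}\ :=\ G\bigl[\,V(H_n)\cup\textstyle\bigcup_{C}V(D_C)\,\bigr],
\]
with $C$ ranging over the components of $G-H_n$.

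\textbf{Routine verifications.} The subgraph $H_{n+1}$ is induced by construction, and connected because $H_n$ is connected, each $D_C$ is connected, and each $D_C$ attaches to $H_n$ by an edge from $N_C$ to $N(C)$ (here $N_C\neq\emptyset$ as $G$ is connected and $H_n\neq\emptyset$). Property (1) holds since $N(H_n)=\bigcup_C N_C\subset\bigcup_C V(D_C)$, and iterating this in a connected graph forces $\bigcup_n V(H_n)=V(G)$. Since there are no $G$-edges between distinct components of $G-H_n$, we have $V(G)\setminus V(H_{n+1})=\bigcup_C(V(C)\setminus V(D_C))$ with no edges across the union, so each component of $G-H_{n+1}$ is a component of some $C-D_C$. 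This yields property (3) at once, since $V(C)\cap V(H_{n+1})=V(D_C)$ is connected; and it yields finite adhesion, since a component $C''$ of $C-D_C$ has all its $G$-neighbours in (the neighbourhood of $C''$ inside $C$) $\cup\,N(C)$, the first set finite because $D_C$ has finite adhesion in $C$ and the second finite by hypothesis.

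\textbf{The crux: property (2).} We must show every topological end lives in a component of $G-H_{n+1}$ (uniqueness then being automatic from finite adhesion). Let $\varepsilon$ be a topological end; by hypothesis it lives in a unique component $C$ of $G-H_n$, and some ray of $\varepsilon$ has a tail in $C$. I claim $\varepsilon\notin\Abs{V(D_C)}$. Granting this, there is a finite $Y$ with $C(Y,\varepsilon)\cap V(D_C)=\emptyset$, so a ray of $\varepsilon$ has a tail lying both in $C(Y,\varepsilon)$ and in $C$, hence in $V(C)\setminus V(D_C)$, and thus in a single component $C''$ of $C-D_C$; so $\varepsilon$ lives in $C''$, a component of $G-H_{n+1}$. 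For the claim, use the transfer principle: as $C$ is a component of $G-N(C)$ with $N(C)$ finite, an end $\varepsilon$ living in $C$ induces a well-defined end $\varepsilon_C$ of $C$, and for every $M\subset V(C)$ one has $\varepsilon\in\Abs{M}$ if and only if $\varepsilon_C$ lies in the $C$-closure of $M$ (indeed, for finite $X\subset V(C)$ the component of $C-X$ containing $\varepsilon_C$ equals $C(X\cup N(C),\varepsilon)$, and testing $\Abs{M}$ against supersets of $N(C)$ suffices). Applying this with $M=V(D_C)$ and with $M=N_C$, together with the envelope identity of $C$-closures, gives $\Abs{V(D_C)}=\Abs{N_C}$. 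Since $N_C\subset N(N(C))$ with $N(C)$ finite, Lemma~\ref{lem:closuredom} (with $W=N(C)$) shows every end of $\Abs{N(N(C))}$, hence every end of $\Abs{V(D_C)}=\Abs{N_C}$, is dominated; as $\varepsilon$ is topological, $\varepsilon\notin\Abs{V(D_C)}$, proving the claim and completing the induction.

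\textbf{Main obstacle.} The only genuinely delicate point is this last claim: one must see that enveloping merely the boundary set $N_C$ inside $C$ is at once \emph{large enough} to reconnect $C\cap H_{n+1}$ and have finite adhesion, yet \emph{small enough} --- because $N_C\subset N(N(C))$ with $N(C)$ finite, via Lemma~\ref{lem:closuredom} --- that no topological end is absorbed into the enlarged part. The accompanying bookkeeping, transferring closures between the subgraph $C$ and $G$, is standard once one notes that $N(C)$ is finite.
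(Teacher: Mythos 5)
Your proof is correct and takes essentially the same route as the paper: build $H_{n+1}$ from $H_n$ by applying Theorem~\ref{thm:envelope} inside each component $C$ of $G-H_n$ to the neighbourhood of the finite set $N(C)$, and rule out topological ends of the enlarged part via Lemma~\ref{lem:closuredom}. You merely make explicit some routine details (the transfer of closures between $C$ and $G$, and the finite-adhesion and uniqueness checks) that the paper's proof leaves implicit.
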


\begin{proof}
Let $H_0$ consist of some arbitrarily chosen singleton. If $H_n$ is already defined, consider some component $C$ of $G-H_n$. Let $W=N(C)$, and let $U_C$ be the neighbourhood of $W$ in $C$. Use Theorem~\ref{thm:envelope} inside $C$ to find a connected envelope $U^*_C$ of $U_C$. Define $H_{n+1}$ to consist of $H_n$ together with all connected envelopes $U^*_C$ for all components $C$ of $G-H_n$. Then $H_{n+1}$ has finite adhesion. 

Now properties (i)  and (iii) hold by construction. For (ii), consider a topological end $\eps$ of $G$. By induction assumption, $\eps$ lives in a unique component $C$ of $G-H_n$. By Lemma~\ref{lem:closuredom}, $\eps$ does not belong to $\Abs{U_C}$, and hence also not to $\Abs{U^*_C}$. By finite adhesion of $U^*_C$, there is a unique component $C'$ of $C - U^*_C$ in which $\eps$ lives. By definition of $H_{n+1}$, this component $C'$ is also a component of $G-H_{n+1}$ as desired.
\end{proof}

\begin{theorem}
\label{thm:td}
Every connected graph has a rooted tree-decomposition, of finite adhesion and into connected parts with upwards disjoint separators, that displays all topological ends.
\end{theorem}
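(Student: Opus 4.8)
The plan is to build the decomposition tree directly on top of the sequence $H_0 \subseteq H_1 \subseteq \cdots$ provided by Lemma~\ref{lem:refiningsequence}. Let $T$ be the rooted tree with a root $r$ and one further node $(n,C)$ for every $n \in \N$ and every component $C$ of $G-H_n$, where $r$ is joined to every $(0,C)$ and $(n,C)$ is joined to $(n{+}1,C')$ whenever $C' \subseteq C$. Since $H_n \subseteq H_{n+1}$, every component of $G-H_{n+1}$ lies in exactly one component of $G-H_n$, so $T$ is indeed a tree, with $(n,C)$ at depth $n+1$. For the parts I would take $V_r := V(H_0)$ and
\[
V_{(n,C)} \ := \ N(C)\ \cup\ \bigl(C \cap H_{n+1}\bigr).
\]
Every part is connected: $C \cap H_{n+1}$ is connected by Lemma~\ref{lem:refiningsequence}(iii), and each $w \in N(C)$ has a neighbour $u$ in $C$, which then lies in $N(H_n) \subseteq H_{n+1}$ by Lemma~\ref{lem:refiningsequence}(i), hence in $C \cap H_{n+1}$; so $w$ attaches to the connected set $C \cap H_{n+1}$.

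Next I would check that $(T,(V_t))$ is a tree-decomposition. Axioms (T1) and (T2) are routine: a vertex $v$ first enters some $H_m$, and then $v$ --- and, for an edge, also its other endpoint --- lies in the part of $(m{-}1,C)$ for the component $C$ of $G-H_{m-1}$ containing it (or in $V_r$ if $m=0$). The axiom needing care is (T3). Fixing $v$, let $m$ be least with $v \in H_m$. One verifies that $v$ lies in $V_{(m-1,C)}$ for exactly one component $C$ of $G-H_{m-1}$ but in no part at a lower level, and that for $n \ge m$ one has $v \in V_{(n,C')}$ precisely when $v$ has a neighbour in $C'$ --- a condition that is inherited by the parent node, because components of $G-H_n$ only shrink as $n$ grows. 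Hence the set of nodes whose part contains $v$ is connected.

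To handle the separators, I would first observe, by an easy induction on depth, that the union $A_{(n,C)}$ of all parts on the $(n,C)$-side of the edge $e$ joining $(n,C)$ to its parent equals $V(C) \cup N(C)$; here one uses that a component $C'$ of $G-H_{n+1}$ lying inside $C$ has $N(C') \subseteq C \cap H_{n+1}$. Consequently the induced separator is $X_e = N(C)$, an adhesion set of $H_n$, hence finite by Lemma~\ref{lem:refiningsequence}; so the decomposition has finite adhesion. For upwards disjoint separators, let $e < e'$ be comparable edges with $X_e = N(C)$, $C$ a component of $G-H_n$, and $X_{e'} = N(C')$, $C'$ a component of $G-H_{n'}$ with $n' > n$. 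Then $X_e \subseteq H_n$, whereas $X_{e'} \cap H_{n'-1} = \emptyset$, because a vertex of $H_{n'-1}$ with a neighbour in $C'$ would lie in $N(H_{n'-1}) \subseteq H_{n'}$ by Lemma~\ref{lem:refiningsequence}(i) and so could not be adjacent to the component $C'$ of $G-H_{n'}$. Since $H_n \subseteq H_{n'-1}$, the separators $X_e$ and $X_{e'}$ are disjoint.

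It remains to see that $\script{T}$ displays the topological ends. An end of $T$ is given by an infinite branch $r, (0,C_0), (1,C_1), \dots$, necessarily with $C_0 \supsetneq C_1 \supsetneq \cdots$; unwinding the definition of where an end lives, an end $\eps$ of $G$ lives in this end of $T$ exactly when $\eps$ lives in every $C_n$. Given such a branch, I would apply Lemma~\ref{lem:undominatedends} with $X_n := N(C_n)$: these sets are finite and pairwise disjoint (again by Lemma~\ref{lem:refiningsequence}(i)), each $C_n$ is a component of $G-X_n$, and $C_n \supseteq C_{n+1} \cup X_{n+1}$, so there is a unique end of $G$ living in all the $C_n$, and it is undominated, i.e.\ topological. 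Conversely, a topological end $\eps$ lives in a unique component $C_n$ of $G-H_n$ for each $n$ by Lemma~\ref{lem:refiningsequence}(ii), these satisfy $C_{n+1} \subseteq C_n$, and the branch $r,(0,C_0),(1,C_1),\dots$ is then an end of $T$ in which $\eps$ lives. Thus $\script{T}$ displays precisely the topological ends. Given Lemma~\ref{lem:refiningsequence}, the argument is essentially bookkeeping; the two places where something must genuinely fit together are the appeal to Lemma~\ref{lem:undominatedends}, which certifies that the end living in each end of $T$ is exactly a topological one, and the use of property (i) of Lemma~\ref{lem:refiningsequence}, which forces separators deeper in $T$ to sit in the later layers $H_{n'}\setminus H_{n'-1}$ and hence keeps nested separators disjoint.
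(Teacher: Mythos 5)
Your proposal is correct and takes essentially the same route as the paper: the same decomposition tree indexed by the components of $G-H_n$ (rooted at a node for $H_0$), the same parts $N(C)\cup(C\cap H_{n+1})$, the same identification $X_e=N(C)$ of the induced separators, and the same combination of Lemma~\ref{lem:undominatedends} with property (ii) of Lemma~\ref{lem:refiningsequence} to show the decomposition displays exactly the topological ends. You merely spell out the verifications the paper leaves as ``readily checked''; the only blemish is a wording slip in the upwards-disjointness argument --- it is the neighbour in $C'$ of a vertex of $H_{n'-1}$ that would lie in $N(H_{n'-1})\subseteq H_{n'}$, contradicting $C'\cap H_{n'}=\emptyset$ --- but the intended argument and its conclusion $N(C')\cap H_{n'-1}=\emptyset$ are sound.
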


\begin{proof}
The sequence $H_0 \subset H_1 \subset H_2 \subset \cdots$ from Lemma~\ref{lem:refiningsequence} gives rise to a tree decomposition $\p{T,\cV}$ of finite adhesion and into connected parts as follows: Write $\cC_n$ for the collection of components of $G-H_n$. The reverse inclusion relation `$\supset$' defines a tree order on the set $T:=\Set{G} \cup \bigcup_{n \in \N} \cC_n$ with root $G$; this will be our decomposition tree. The part corresponding to the root of $T$ will be $H_0$. The part corresponding to a node $C \in \cC_n$ of $T$ will be $N(C) \cup \p{C \cap H_{n+1}}$, which is connected by (i) and (iii). Then it is readily checked that all properties (T1) -- (T3) of a tree-decomposition are implied by (i), as is the property that this tree-decomposition has upwards disjoint separators.

To see that $(T,\cV)$ displays all topological ends, observe first that every rooted ray $t_0t_1t_2t_3,\ldots$  in $T$ gives rise to a nested sequence of non-empty components $C_{t_1} \supsetneq C_{t_2} \supsetneq \cdots$ with $C_{t_n} \in \cC_n$ for $n \in \N$, 
such that $C_{t_n} \supseteq 
C_{t_{n+1}} \cup N(C_{t_{n+1}})$ for all $n \in \N$ by property (i). Hence by Lemma~\ref{lem:undominatedends} there is a unique end that lives in all $C_{t_n}$, which is undominated. Conversely, every topological end of $G$ lives in a unique connected component $C_{t_n}$ of $G - H_n$ by (ii), and so $t_1 t_2 t_3\ldots$ is a ray in $T$ corresponding to this end.
\end{proof}

\begin{theorem}
Every connected graph admits a spanning tree that is end-faithful for the topological ends.
\end{theorem}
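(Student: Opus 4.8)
The plan is to build the spanning tree from the rooted tree-decomposition $\p{T,\cV}$ constructed in Theorem~\ref{thm:td}, by choosing a rooted spanning tree inside each part in a way that is compatible across the separators, and then taking the union. Concretely, I would first fix the tree-decomposition $\p{T,\cV}$ from Theorem~\ref{thm:td}: recall its parts are $H_0$ at the root and $V_C := N(C) \cup \p{C\cap H_{n+1}}$ at a node $C\in\cC_n$, each part is connected, the adhesion is finite, and the separators are upwards disjoint. The last property is the crucial structural input: along any root-ray of $T$ the separators $X_e$ are pairwise disjoint, so a vertex of $G$ can only "enter" and later "leave" the cone below a node through two disjoint finite sets, and in fact every vertex belongs to parts indexed by a connected (hence, rooted-path-connected) subtree of $T$ by (T3).

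Next I would construct $S$ recursively down the tree $T$. Start by choosing any rooted spanning tree $S_0$ of the connected graph $G[H_0]$. Having built the part of $S$ covering $\bigcup_{t\le C}V_t$ for $C\in\cC_n$, consider a child $C'\in\cC_{n+1}$ of $C$; its separator towards $C$ is the finite set $X = N(C')\subseteq V_C$, already spanned by the tree built so far. Inside the connected graph $G[V_{C'}]$, the set $X$ is nonempty and finite, so I can pick a rooted spanning tree of $G[V_{C'}]$ and then \emph{re-root/prune} it so that it attaches to the already-constructed tree only along $X$: precisely, I take a spanning tree of $G[V_{C'}\setminus X]$ within each of its components, and join each such component-tree by a single edge to the existing tree at an appropriate vertex of $X$ (this uses connectedness of $G[V_{C'}]$ and that every component of $G[V_{C'}\setminus X]$ has a neighbour in $X$). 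Doing this for all children and taking the union over all nodes yields a subgraph $S\subseteq G$; properties (T1)–(T2) guarantee $S$ meets every vertex and the construction never creates a cycle (each newly added vertex set hangs off the previous tree along a single attaching edge, by upwards-disjointness of separators there is no "second path" back up), so $S$ is a spanning tree of $G$, and by construction it is rooted at the root of $S_0$.

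Finally I would verify end-faithfulness for the topological ends. Given a topological end $\eps$ of $G$, by Lemma~\ref{lem:refiningsequence}(ii) it lives in a unique component $C_{t_n}\in\cC_n$ of $G-H_n$ for every $n$, giving a root-ray $t_1t_2t_3\ldots$ in $T$ exactly as in the proof of Theorem~\ref{thm:td}. Since each part $V_{t_n}$ is connected and the construction links consecutive parts along their (finite, nonempty) separators, $S$ contains a rooted ray $R$ that passes through infinitely many vertices of each $V_{t_n}$, and $R$ is eventually inside each $C_{t_n}$; by Lemma~\ref{lem:undominatedends} the unique end living in all $C_{t_n}$ is $\eps$, so $R\in\eps$. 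For uniqueness, any rooted ray of $S$ in $\eps$ must — being eventually contained in each $C_{t_n}$ and passing through the separators $X_{t_nt_{n+1}}$, which are pairwise disjoint — agree with $R$ cofinally, hence equal it, because $S$ is a tree. The main obstacle I expect is precisely this coherent gluing step: making sure that when we locally choose spanning trees of the parts we do not accidentally create a cycle in $S$ and that the resulting ray structure of $S$ faithfully tracks the tree $T$; this is where upwards disjointness of the separators, the finiteness of adhesion, and (T3) all have to be used together, and it is the only point that needs genuine care rather than routine checking.
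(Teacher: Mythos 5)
Your proposal is correct and takes essentially the same route as the paper: since the parts of the tree-decomposition from Theorem~\ref{thm:td} are exactly $H_0$ and $N(C)\cup(C\cap H_{n+1})$, your recipe of spanning each part beyond its top separator and attaching it by a single edge is precisely the paper's recursive construction of spanning trees $T_n$ of $H_n$ glued via single edges $e_C$, and your verification (the nested cones $C_{t_n}$, Lemma~\ref{lem:undominatedends}, and uniqueness of the attaching edge into each cone) matches the paper's. Two cosmetic slips only: a part may be finite, so the ray need not meet each $V_{t_n}$ in infinitely many vertices, and uniqueness rests on the fact that the single attaching edge is the only edge of $S$ entering each cone rather than on disjointness of the separators per se.
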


\begin{proof}
Given the sequence $H_0 \subset H_1 \subset H_2 \subset \cdots$ from Lemma~\ref{lem:refiningsequence} we construct rooted trees $T_0 \subset T_1 \subset \cdots$ such that $T_n$ is a spanning tree of $H_n$ as follows: Let $T_0$ be a rooted spanning tree of $H_0$. If $T_n$ is already defined, extend $T_n$ to a spanning tree of $H_{n+1}$ by choosing, for every component $C$ of $G-H_n$ a spanning tree of $C \cap H_{n+1}$ by (iii), and attaching it to $T_n$ via a single edge $e_C$. 

We show that $T = \bigcup_{n \in \N} T_n$ is a spanning tree of $G$ that is end-faithful for the topological ends of $G$. First, every topological end $\eps$ of $G$ lives, for all $n$, in a unique connected component $C_{n}$ of $G - H_n$ by (ii). Since $C_{n+1} \cup N(C_{n+1}) \subset C_n$ by (i), the edges $e_{C_n}$ lie on a rooted ray $R$ of $T$, which satisfies $R \in \eps$. To see that $R$ is the unique ray of $T$ which belongs to $\eps$, suppose for a contradiction there was another rooted ray $R' \subset T$ belonging to $\eps$. Then $R$ and $R'$ are eventually disjoint, so choose $n \in \N$ such that $R \cap R' \subset H_n$. Since $e_{C_n} \in E(R) \setminus E(R')$, it follows that $R$ eventually belongs to $C_n$ while $R' \cap C_n = \emptyset$, contradicting that $R$ and $R'$ are equivalent.
\end{proof}

\bibliographystyle{abbrv}
\bibliography{TD}

\end{document}